\documentclass[12pt]{amsart}
\usepackage{txfonts}
\usepackage{amssymb}

\setlength\topmargin{-1.2cm}
\setlength\textheight{22cm}
 \setlength\textwidth{15.5cm}
 \setlength\oddsidemargin{0.3cm} \setlength\evensidemargin{0.1cm}

\theoremstyle{plain}

\newtheorem{Thm}{Theorem}

\newtheorem{Lem}[Thm]{Lemma}
\newtheorem{Def}[Thm]{Definition}
\newtheorem{Rk}[Thm]{Remark}

\errorcontextlines=0

\begin{document}

\title[Global Yamabe flow ]
{Global Yamabe flow on asymptotically flat manifolds}

\author{Li Ma}
\address{Li MA,  School of Mathematics and Physics\\
  University of Science and Technology Beijing \\
  30 Xueyuan Road, Haidian District
  Beijing, 100083\\
  P.R. China }

\address{ Department of Mathematics \\
Henan Normal university \\
Xinxiang, 453007 \\
China}

\thanks{Li Ma's research was partially supported by the National Natural
  Science Foundation of China (No.11771124)}

\begin{abstract}
In this paper, we study the existence of global Yamabe flow on asymptotically flat (in short, AF or ALE) manifolds. Note that the ADM mass is preserved in dimensions 3,4 and 5. We present a new general local existence of Yamabe flow on a complete Riemannian manifold with the initial metric quasi-isometric to a
background metric of bounded scalar curvature.
Asymptotic behaviour of the Yamabe flow on ALE manifolds is also addressed provided the initial scalar curvature is non-negative and there is a bounded subsolution to the corresponding Poisson equation. We also present a maximum principle for a very general parabolic equations on the complete Riemannian manifolds.

{ \textbf{Mathematics Subject Classification 2010}: 53E99, 35A01, 35K55, 35R01, 53C21.}

{ \textbf{Keywords}: Yamabe flow, global existence, scalar curvature, asymptotic behaviour}
\end{abstract}

\maketitle

\section{Introduction}\label{sect1}
The goal of this paper is to  consider the global Yamabe flow on complete manifolds. This topic has recently been studied in the works \cite{M}, \cite{MC}, \cite{CZ} and \cite{M1,M2}. Since the Yamabe flow is degenerate, the expected global flow is rare, however, the Yamabe flow on  asymptotically flat (in short, AF or ALE) manifolds is widely believed to be global. We shall confirm this in this paper.
Hamilton  \cite{Hamilton1989} \cite{H} have introduced Yamabe flow which describes a family of Riemannian metrics $g(t)$ subject to the evolution equation $\frac{\partial}{\partial t}g=-R(g)\,g$, where $R(g)$ denotes the scalar curvature corresponding to the metric $g$.
Hamilton proved local in time existence of Yamabe flows on compact manifolds without boundary.
Asymptotic behaviour of the Yamabe flow was subsequently analysed by B. Chow \cite{C}, R. Ye \cite{Y94}, Schwetlick and M. Struwe \cite{SS} and S. Brendle \cite{B}. The discrete Morse flow method for 2-d Yamabe flow was developed in \cite{MW}.
The theory of Yamabe flows on non-compact manifolds was addressed by
Ma and An \cite{AM}. Daskalopoulos and Sesum \cite{Daskalopoulos2013} analysed the profiles of self-similar solutions (Yamabe solitons).
More recently, Bahuaud and Vertman \cite{Bahuaud2014,Bahuaud2016} constructed Yamabe flows on spaces with incomplete edge singularities such that the singular structure is preserved along the Yamabe flow.
{Choi}, {Daskalopoulos}, and {King} \cite{Choi2018} were able to find solutions to the Yamabe flow on the Euclidean space $\mathbb{R}^n$ which develop a type II singularity in finite time. In the interesting work \cite{GT}, Gregor Giesen and Peter M. Topping obtained remarkable results about Yamabe flow incomplete surfaces. In \cite{S1, S2}, assuming that the initial metric is conformally hyperbolic with conformal factor and scalar curvature bounded from above, Schulz had obtained existence of instantaneously complete Yamabe flows on hyperbolic space of arbitrary dimension $n\geq3$.  The study of Yamabe flow on $R^n$ may be included in the class of porous-media equations \cite{Ar} \cite{DK} \cite{HP}.

Given an $n$-dimensional complete Riemannian manifold $(M^n,g_0)$, $n\geq 3$.
The Yamabe flow on  $(M^n,g_0)$ is a family of Riemannian metrics $\{g(\cdot, t)\}$ on $M$  defined by the
evolution equation
\begin{equation}\label{yamabe_flow_curvature}
\left\{
\begin{array}{ll}
         \frac{\partial g}{\partial t}=-Rg \quad &\text{in}\ M^n\times[0,T),\\
                  g(\cdot,0)=g_0 &\text{in}\ M^n,
\end{array}
\right.
\end{equation}
where  $R$ is the
scalar curvature of the metric
$ g:=g(\cdot,t)=u^{\frac{4}{n-2}}g_0, $
where
$u:M^n\to \mathbb{R}^+$ is a positive smooth function on $M^n$. Let $p=\frac{n+2}{n-2}$, $L_{g_0}u=\Delta_{g_0}u-aR_{g_0}u$ and
$a=\frac{n-2}{4(n-1)}$.
By changing time by a constant scale,
(\ref{yamabe_flow_curvature}) can be written in the equivalent form
\begin{equation}\label{yamabe_flow_u}
\left\{
\begin{array}{ll}
         \frac{\partial u^p}{\partial t}=L_{g_0}u, \quad &\text{in}\ M^n\times[0,T),\\
                  u(\cdot,0)=1, &\text{in}\ M^n.
\end{array}
\right.
\end{equation}

To understand the local existence result of the Yamabe flow on the Riemannian manifold $(M,g_0)$, we may choose a base metric $g_M$ on $M$ and write the Yamabe flow equation \cite{S1} as follows. Let $g(t)=w(x,t)g_M$ with $w=w(x,t)>0$ on $M$. Then the Yamabe flow equation is
$$
\frac{1}{m-1}w_t=-\frac{wR}{n-1}=-\frac{R_0}{n-1}+\frac{\Delta_{g_{M}} w}{w}+\frac{(n-6)}{4}\frac{|\nabla w|_{g_{M}}^2}{w^2},
$$
with $w(0)=w_0$.
We may denote the terms involving $w$ in the equation above by
$$
B[w]:=(n-1)\Bigl(-\frac{R_0}{n-1}+\frac{\Delta_{g_{M}} w}{w}+\frac{(n-6)}{4}\frac{|\nabla w|_{g_{M}}^2}{w^2}\Bigr).
$$
We shall apply inverse function theorem to this form of equation to get local existence of solutions.

Before presenting the main result of this paper, we need the following two definitions.
The first one is the definition of asymptotically flat (AF or ALE) manifold of order $\tau>0$ (\cite{S} \cite{LP}).
\begin{Def}\label{AE_def}
A Riemannian manifold $M^n$, $n\geq 3$, with $C^{\infty}$ metric $g$ is called asymptotically flat of order $\tau$
if there exists a decomposition $M^n= M_0\cup M_{\infty}$
(for simplicity we deal only with the case of one end and the case of multiple
ends can be dealt with similarly) with $M_0$ compact and a diffeomorphism $M_{\infty}\cong \mathbb{R}^n-B(o,R_0)$ for some constant $R_0 > 0$ such that
\begin{align}\label{AE}
g_{ij} -\delta_{ij}\in C^{2+\alpha}_{-\tau}(M)
\end{align}
(defined in Definition \ref{elliptic_wss} below) in the coordinates $\{x^i\}$ induced on $M_{\infty}$. And the coordinates $\{x^i\}$ are called asymptotic coordinates.
\end{Def}

The second one is about the fine solution to Yamabe flow (\cite{CZ} \cite{M3}).
\begin{Def}\label{fine}
We say that $u(x,t)\in C^1(M\times [0,t_{max})$ is a fine function
if $0<\delta\leq u(x,t)\leq C$ for $0\leq t\leq T$ with any $0<T<t_{max}$ and
$\sup\limits_{M^n\times [0,T]}|\nabla_{g_0} u(x,t)|\leq C$.
We say that $u(x,t)\in C^1(M\times [0,t_{max})$ is a fine solution of the Yamabe flow, $0\leq t<t_{max}$, on a complete manifold $(M^n,g_0)$ if it is a fine function solution to the Yamabe flow
and $\sup\limits_{M^n\times [0,T]}|Rm(g)|(x,t)\leq C$ for any $T<t_{max}$, such that either $\lim\limits_{t\to t_{max}}\sup\limits_{M}|Rm|(\cdot,t)=\infty$ for $t_{max}<\infty$ or $t_{max}=\infty$, where $Rm(g)$ is the Riemannian
curvature of the metric $g:=g(t)=u^{4/(n-2)}g_0$.
\end{Def}
We remark that in the language from (2.5.2) in \cite{SY} ( see also \cite{M4}), the fine solution is uniformly quasi-isometric to the initial metric $g_0$ on every interval $[0,T]$ for $0<T<t_{max}$.

Our main result is in below.
\begin{Thm}\label{global} Given an $n$-dimensional asymptotically flat manifold $(M^n,g_0)$ of any order $\tau>\frac{n-2}{2}$. There exists an unique global Yamabe flow $g(x,t)=u(x,t)^{4/(n-2)}g_0$ with the initial metric $g(0)=g_0$ and
 the solution $u(x,t)$, $0\leq t<t_0<\infty$, is a fine solution to the Yamabe flow (\ref{yamabe_flow_u}) and the flow preserves the AF property of the initial metric. In other word, for $v=1-u$, we have $v(x,t)\in C^{2+\alpha}_{-\tau}(M)$ and $g_{ij}(x,t)-\delta_{ij}\in C^{2+\alpha}_{-\tau}(M)$ for $t\in [0,t_{max})$.
\end{Thm}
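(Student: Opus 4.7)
My plan is to combine a local existence statement in the weighted Hölder framework with global-in-time a priori bounds, and then with a persistence result for the AF decay, so that the resulting solution is a unique fine Yamabe flow on $[0,\infty)$ whose metric deformation remains in $C^{2+\alpha}_{-\tau}$.

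For local existence I would choose the base metric $g_M=g_0$, write $g(t)=w(\cdot,t)g_0$, and view the Yamabe flow as the quasilinear parabolic equation $w_t=(n-1)B[w]$ recalled in the introduction. Linearising $B$ at $w\equiv 1$ produces, up to a positive constant, the conformal Laplacian $L_{g_0}$. Under the AF hypothesis with $\tau>\frac{n-2}{2}$, this operator is an isomorphism $C^{2+\alpha}_{-\tau}(M)\to C^{\alpha}_{-\tau-2}(M)$, since $-\tau$ is a nonexceptional weight at infinity. Applying the inverse function theorem to the parabolic map $w\mapsto w_t-(n-1)B[w]$ in the weighted parabolic Hölder class $C^{2+\alpha,\,1+\alpha/2}_{-\tau}(M\times[0,T_0])$ around $w\equiv 1$ then yields a unique short-time solution, from which $v=1-u\in C^{2+\alpha}_{-\tau}(M)$ and $g_{ij}(t)-\delta_{ij}\in C^{2+\alpha}_{-\tau}(M)$ on an initial interval $[0,T_0]$; fineness (two-sided bounds for $u$ and a gradient bound) is automatic there.

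To promote this to a global solution I would bound $u$ from above and below on $M\times[0,T]$ for each $T<t_{\max}$. Writing the flow as $p u^{p-1}u_t=\Delta_{g_0}u-aR_{g_0}u$ and invoking the general parabolic maximum principle for complete manifolds proved later in this paper, $u$ can be compared to a time-independent positive barrier $\phi$ satisfying $\Delta_{g_0}\phi\leq aR_{g_0}\phi$; such a $\phi$ exists because $R_{g_0}=O(|x|^{-\tau-2})$ and the condition $\tau>\frac{n-2}{2}$ lets one invert $\Delta_{g_0}$ against $aR_{g_0}$ in the weighted setting. This yields $0<\delta\leq u\leq C$ uniformly on $[0,T]$. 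Bernstein-type differentiation of the equation, combined with the decay of $R_{g_0}$ and Shi-type estimates for the conformal evolution, then produces bounds on $|\nabla_{g_0}u|$ and on $|Rm(g)|$ on $M\times[0,T]$. By Definition \ref{fine} these are precisely the conditions that forbid $t_{\max}<\infty$, so the flow is global and fine on every $[0,T]$; uniqueness in the fine class follows by applying the same maximum principle to the difference of two solutions.

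The main obstacle is showing the weighted decay $v(\cdot,t)\in C^{2+\alpha}_{-\tau}(M)$ for all $t\geq 0$, not merely for short times. In the asymptotic chart $M_\infty\cong\mathbb{R}^n\setminus B(o,R_0)$ the evolution of $v=1-u$ reads, schematically,
\begin{equation*}
p(1-v)^{p-1}\partial_t v \;=\; -\Delta_0 v + E(x,v,\nabla v,\nabla^2 v),
\end{equation*}
where $\Delta_0$ is the Euclidean Laplacian and $E$ collects lower-order terms of combined decay at least $|x|^{-\tau-2}$ inherited from $g_0-\delta$, $R_{g_0}$, and the already-established uniform bounds on $u$. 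Testing $v$ against the barrier $(1+|x|^2)^{-\tau/2}$ and running the maximum principle on the exterior cylinder $(M_\infty\cap B(o,R))\times[0,T]$ shows that the weighted sup-norm $\|v(\cdot,t)\|_{L^\infty_{-\tau}}$ stays bounded by a fixed multiple of $\|v(\cdot,0)\|_{L^\infty_{-\tau}}$ for every $t<t_{\max}$. Once the pointwise decay $|v(x,t)|\leq C(1+|x|)^{-\tau}$ is in place, weighted parabolic Schauder estimates upgrade the control to $v(\cdot,t)\in C^{2+\alpha}_{-\tau}(M)$ and hence $g_{ij}(x,t)-\delta_{ij}\in C^{2+\alpha}_{-\tau}(M)$ for all $t\geq 0$, completing the proof of Theorem \ref{global}.
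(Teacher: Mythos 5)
Your overall architecture (local existence, uniform two-sided bounds on $u$ to rule out finite-time breakdown, then persistence of the weighted decay) matches the paper's, and your local-existence and decay-persistence steps are workable variants of what the paper does (the paper gets local existence by the inverse function theorem on an exhaustion by bounded domains rather than in weighted parabolic H\"older spaces, and delegates the decay persistence to Theorem 5.1 of \cite{CZ}, which is exactly your exterior-barrier argument). The genuine gap is in the step you rely on most: the uniform positive lower bound $u\geq\delta>0$ on $M\times[0,T]$ obtained from a time-independent barrier. First, the inequality is written in the wrong direction: for the flow $pu^{p-1}u_t=L_{g_0}u$ a static \emph{lower} barrier must be a subsolution, $L_{g_0}\phi=\Delta_{g_0}\phi-aR_{g_0}\phi\geq 0$ with $0<\delta\leq\phi\leq 1$, whereas you ask for $\Delta_{g_0}\phi\leq aR_{g_0}\phi$, which is a supersolution and only yields an upper bound. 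Second, and more seriously, the existence of such a bounded, uniformly positive subsolution is \emph{not} a consequence of the AF hypothesis alone: it is precisely the extra assumption the paper imposes in Theorem \ref{main_6} (together with $R_{g_0}\geq 0$), and Theorem \ref{global} assumes no sign condition on $R_{g_0}$. Inverting $\Delta_{g_0}$ against $aR_{g_0}$ in the weighted Fredholm framework produces a solution of $L_{g_0}\phi=0$ with $\phi\to 1$ at infinity when the kernel is trivial, but it gives no positivity of $\phi$; if $L_{g_0}$ has a negative eigenvalue the solution changes sign and no positive bounded-below subsolution exists. So your argument cannot, as written, exclude $\inf_M u(\cdot,t)\to 0$ as $t\to t_{\max}$, which is the only real obstruction to globality for this degenerate flow.

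The paper closes exactly this step by a different mechanism that uses no global static barrier: the weighted decay estimate (Theorem 5.1 of \cite{CZ}, proved with the generalized maximum principle of the appendix) forces $\tfrac12\leq u\leq\tfrac34$ outside a fixed compact set $S$ for all $t<T_{\max}$, and on the compact core one invokes DiBenedetto's continuity estimate to extend $u$ continuously to $t=T_{\max}$ and Ye's argument from \cite{Y94} for the porous-medium structure of $\partial_t u^p=L_{g_0}u$ to show $u$ cannot vanish anywhere in $S$ at $T_{\max}$; this yields $c_1\leq u\leq c_2$ on $M\times[0,T_{\max}]$ and the standard continuation. If you want to keep your scheme you must either replace the static barrier by this compactness-plus-Ye argument, or add the subsolution hypothesis explicitly (at which point you are proving Theorem \ref{main_6}, not Theorem \ref{global}). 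A minor further point: your exterior equation for $v=1-u$ should read $p(1-v)^{p-1}\partial_t v=\Delta_0 v+E$ (with $+\Delta_0 v$); as written it is backward parabolic and the barrier comparison on the exterior cylinder would go the wrong way.
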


The uniqueness part follows from the standard argument and we shall omit the detail. We shall present a general local existence result in Theorem \ref{short_existence} in section \ref{sect2}.

As a direct application of the computation as showed in \cite{CZ} and \cite{M}, we have
\begin{Thm}\label{main_5}
Let $u(x,t)$, $0\leq t< t_0<\infty$, be the fine solution to the Yamabe flow (\ref{yamabe_flow_u}) on an $n$-dimensional asymptotically flat manifold $(M^n,g_0)$ of order $\tau>\frac{n-2}{2}$ with $u(0)=1$. Assume that $R_{g_0}\geq 0$ and $R_{g_0}\in L^{1}(M)$, where $R_{g_0}$ is the scalar curvature of $g_0$.
Denoted by  $g(t)=u^\frac{4}{n-2}g_0$.
Then for $n=3,4,$ or $5$, ADM mass $m(g(t))$ (see \cite{SY} \cite{LP} or below for the definition) is well-defined under the Yamabe flow (\ref{yamabe_flow_u}) for $0\leq t<\infty$ (i.e. ADM mass is independent of the choices of the coordinates),i.e,  $m(g(t))\equiv m(g_0)$.
\end{Thm}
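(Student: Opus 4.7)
The plan is to leverage Theorem \ref{global} (preservation of asymptotic flatness along the flow) together with the standard conformal transformation law for ADM mass, reducing the assertion to the asymptotic decay of a boundary integral at infinity which turns out to vanish precisely in dimensions three, four and five.

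First, I would establish that $m(g(t))$ is well-defined for every $t\in[0,t_{\max})$. By Theorem \ref{global}, $g_{ij}(\cdot,t)-\delta_{ij}\in C^{2+\alpha}_{-\tau}(M)$ with $\tau>(n-2)/2$, which is exactly the regularity required by \cite{LP,SY}. The hypothesis $R_{g_0}\ge 0$ propagates along the flow: the scalar curvature along a Yamabe flow obeys $\partial_tR=(n-1)\Delta_gR+R^2$, and the general maximum principle proved in this paper, applied on the complete manifold $(M,g(t))$ whose curvature is bounded by fineness, yields $R_{g(t)}\ge 0$ and preserves $R_{g(t)}\in L^1$ uniformly on compact time intervals.

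Next, I would invoke the conformal change formula for ADM mass
\[
m(g(t))-m(g_0)=-c_n\lim_{r\to\infty}\int_{S_r}\partial_\nu u(\cdot,t)\, dS_{g_0},\qquad c_n=\frac{4(n-1)}{(n-2)\omega_{n-1}},
\]
following \cite{LP}, which is the starting point of the computations in \cite{CZ,M}. By the divergence theorem and the conformal Laplacian identity $L_{g_0}u=-aR_gu^p$, the right-hand side equals $c_n\int_M(aR_gu^p-aR_{g_0}u)\, dV_{g_0}$, so the limit converges in view of the $L^1$ control from the first step. Substituting $\partial_tu=-(a/p)R_gu$ derived from \eqref{yamabe_flow_u} and differentiating in $t$ (interchanging $d/dt$ and $\lim_{r\to\infty}$ via uniform-in-$t$ weighted estimates) gives
\[
\frac{d}{dt}m(g(t))=\frac{ac_n}{p}\lim_{r\to\infty}\int_{S_r}\partial_\nu(R_gu)\, dS_{g_0}.
\]

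I would close with an asymptotic analysis. Weighted Schauder bounds for $L_{g_0}$ together with $u-1\in C^{2+\alpha}_{-\tau}$ and $g_0-\delta\in C^{2+\alpha}_{-\tau}$ yield $R_g=O(|x|^{-\tau-2})$ and $\partial_\nu(R_gu)=O(|x|^{-\tau-3})$ uniformly on $[0,T]$, so the boundary integral is bounded by $Cr^{n-\tau-4}$. The hypothesis $\tau>(n-2)/2$ forces $n-\tau-4<(n-6)/2$, which is negative precisely when $n\le 5$. Hence the limit vanishes and $m(g(t))\equiv m(g_0)$. The main technical obstacle is the interchange of $d/dt$ with $\lim_{r\to\infty}$: this demands weighted parabolic Schauder estimates for \eqref{yamabe_flow_u} that are uniform in $t$ on $[0,T]$ and respect the decay exponent $\tau$, depending essentially on Theorem \ref{global} propagating that same $\tau$ (rather than any weaker exponent) along the flow.
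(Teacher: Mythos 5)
Your proposal reconstructs, in outline, exactly the computation the paper relies on: the paper offers no proof of Theorem \ref{main_5} beyond citing Theorem \ref{short_existence}, Theorem 5.1 of \cite{CZ} and Theorem 6 of \cite{M}, and those references carry out precisely the steps you describe --- the conformal mass-variation formula, the identity $\partial_t u=-(a/p)R_gu$ from \eqref{yamabe_flow_u}, and the decay count $n-\tau-4<(n-6)/2<0$ singling out $n\le 5$. The one spot to watch is the claim $\partial_\nu(R_gu)=O(|x|^{-\tau-3})$ uniformly in $t$, which requires third-order weighted (parabolic) regularity of $u$ and $g_0$ not furnished by $C^{2+\alpha}_{-\tau}$ membership alone and must be obtained by bootstrapping the flow equation; this is a technical refinement rather than a change of route.
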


Recall here that the ADM mass of $n$-dimensional AF Riemannian manifolds \cite{LP} is defined as
\begin{align}\label{ADM_Mass}
m(g)=\lim\limits_{r\to\infty}\frac{1}{4\omega}\int_{S_r}(\partial_j g_{ij}-\partial_ig_{jj})dS^i,
\end{align}
where $\omega$ denotes the volume of unit sphere in $\mathbb{R}^n$, $S_r$ denotes the Euclidean sphere with radius $r$ and $dS^i$ is the normal surface volume element to $S_r$ with respect to Euclidean metric.
Similar results for Ricci flow were established in \cite{BW} and \cite{DM}.

With an application of Theorem 5 in \cite{M3}, we get the convergent result of the global Yamabe flow below.

\begin{Thm}\label{main_6}
Let $u(x,t)$, $0\leq t<\infty$, be the global solution to the Yamabe flow (\ref{yamabe_flow_u}) on an $n$-dimensional asymptotically flat manifold $(M^n,g_0)$ of order $\tau>\frac{n-2}{2}$ with $u(0)=1$. Assume that $R_{g_0}\geq 0$ and there exists a bounded sub-solution $w_0$ to the Poisson equation
$$
L_{g_0}w_0=\Delta_{g_0}w_0-aR_{g_0}w_0\geq 0, \ \ in \ M.
$$
Then the Yamabe flow $g(t)$ converges in $C^\infty_{loc}(M)$ to a Yamabe metric of  scalar curvature zero.
\end{Thm}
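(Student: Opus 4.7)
The plan is to exploit the monotone parabolic structure supplied by the sign condition $R_{g_0}\geq 0$ together with the bounded subsolution $w_0$: I will first show that the conformal factor $u(\cdot,t)$ is non-increasing in $t$, then that it is uniformly bounded below by a positive multiple of $w_0$, and finally pass to the long-time limit using standard parabolic regularity.

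The first step is to propagate non-negativity of the scalar curvature. Under the Yamabe flow, $R_{g(t)}$ satisfies
\[
\partial_t R_g=(n-1)\Delta_g R_g+R_g^{\,2},
\]
so by the general maximum principle on complete manifolds advertised in the abstract, applied to the fine solution $g(t)$ produced by Theorem \ref{global} (which provides the uniform $C^0$ and curvature bounds on every slab $[0,T]$ that make the principle applicable), I obtain $R_g(\cdot,t)\geq 0$ for all $t\geq 0$. Translating $\partial_t g=-R_g\,g$ into $\partial_t u=-\tfrac{n-2}{4}R_g u$ then forces $\partial_t u\leq 0$, so that $u(\cdot,t)\leq u(\cdot,0)=1$ and $u$ is pointwise monotonically non-increasing in $t$.

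For the lower bound, after rescaling the bounded subsolution by a positive constant (which preserves $L_{g_0}w_0\geq 0$), I may assume $0<w_0\leq 1$ on $M$. Setting $U=u^p$, the flow reads $\partial_t U=L_{g_0}U^{1/p}$, and $W:=w_0^p$ is time-independent with $\partial_t W-L_{g_0}W^{1/p}=-L_{g_0}w_0\leq 0$ and $W(\cdot,0)\leq 1=U(\cdot,0)$; parabolic comparison on the complete manifold, again invoking the general maximum principle, gives $u(x,t)\geq w_0(x)>0$ uniformly. Combining, $w_0(x)\leq u(x,t)\leq 1$, so the equation $\partial_t u^p=L_{g_0}u$ is uniformly parabolic on every compact $K\subset M$ with smooth, $t$-uniform coefficients. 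Interior parabolic Schauder estimates and a bootstrap deliver uniform $C^{k}_{\mathrm{loc}}$ bounds for all $k$, and the pointwise monotonicity then upgrades to $C^{\infty}_{\mathrm{loc}}$ convergence $u(\cdot,t)\to u_\infty$ with $w_0\leq u_\infty\leq 1$. Since monotonicity and the uniform bounds force $\partial_t u^p\to 0$ in $C^0_{\mathrm{loc}}$ along an appropriate subsequence, passing to the limit gives $L_{g_0}u_\infty=0$ classically, so that $g_\infty:=u_\infty^{4/(n-2)}g_0$ has vanishing scalar curvature.

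The main obstacle will be rigorously justifying the two applications of the maximum principle on the non-compact AF background: the quadratic reaction $R_g^{\,2}$ in the scalar curvature equation and the degenerate weight $u^{p-1}$ in the equation for $u$. Both are handled by the fine solution bounds of Theorem \ref{global}, which keep the coefficients uniformly bounded on each slab $[0,T]$, so the general parabolic maximum principle promised in the abstract applies verbatim; this is precisely the packaging achieved by Theorem 5 of \cite{M3}, whose direct application is cited in the statement, and whose role here is to convert the uniform two-sided bound on $u$ and the monotonicity in $t$ into the desired $C^{\infty}_{\mathrm{loc}}$ convergence to a scalar-flat Yamabe metric.
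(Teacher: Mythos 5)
Your argument is correct and follows essentially the same route as the paper's own proof: propagate $R_{g(t)}\geq 0$ by the maximum principle so that $\partial_t g=-Rg\leq 0$, use the rescaled subsolution $\delta w_0$ as a static lower barrier for $u$, and combine monotonicity with interior parabolic regularity to get $C^\infty_{loc}$ convergence to a scalar-flat limit. You simply supply more detail than the paper's very terse proof (and, like the paper, you implicitly assume $w_0>0$ so that the barrier is a genuine positive lower bound, which the stated hypothesis ``bounded sub-solution'' does not literally guarantee).
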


We now recall the definition of weighted spaces (see \cite{LP}) for elliptic operators on asymptotically flat manifolds.
\begin{Def}\label{elliptic_wss}
Suppose $(M^n,g)$ is an $n$-dimensional asymptotically flat manifold with asymptotic coordinates $\{x^i\}$. Denote $D^j_x v=\sup\limits_{|\alpha|=j}|\frac{\partial^{|\alpha|}}{\partial x_{i_1}\cdots\partial x_{i_j}}v|$.
Let $r(x)=|x|$ on $M_{\infty}$ (defined in Definition \ref{AE_def}) and extend $r$ to a smooth positive function on all of $M^n$. For $q\geq 1$ and $\beta\in \mathbb{R}$,
the weighted Lebesgue space $L^q_{\beta}(M)$ is defined as the set of locally integrable functions $v$ with the norm given by
$$ ||v||_{L^q_\beta(M)}=\left\{
                      \begin{array}{ll}
                        (\int_{M}|v|^q r^{-\beta q-n}dx)^{\frac{1}{q}}, & \hbox{$q<\infty$;} \\
                        ess \sup\limits_{M} (r^{-\beta}|v|), & \hbox{$q=\infty$.}
                      \end{array}
                    \right.
$$
 Then the weighted Sobolev space $W^{k,q}_\beta(M)$ is defined as the set of functions $v$ for which $|D^j_xv|\in L^q_{\beta-j}(M)$ with the norm
$$
||v||_{W^{k,q}_\beta(M)}=\sum\limits^k_{j=0}||D^j_x v||_{L^q_{\beta-j}(M)}.
$$
For a nonnegative integer $k$, the weighted $C^k$ space $C^k_{\beta}(M)$ is defined as the set of $C^k$ functions $v$ with the norm
$$
||v||_{C^k_\beta(M)}=\sum\limits_{j=0}^k\sup\limits_{M} r^{-\beta+j}|D^j_xv|.
$$
The weighted H\"{o}lder space $C^{k+\alpha}_{\beta}(M)$ is defined as the set of functions $v\in C^{k}_{\beta}(M)$ with the norm
$$
||v||_{C^{k+\alpha}_\beta(M)}=||v||_{C^k_\beta(M)}+\sup\limits_{x\neq y\in M}\min(r(x),r(y))^{-\beta+k+\alpha}\frac{|D^k_xv(x)-D^k_xv(y)|}{|x-y|^{\alpha}}.
$$
\end{Def}

We end the introduction with a brief outline of the paper. We discuss the local existence theory of Yamabe flow on a complete Riemannian manifold with bounded scalar curvature in section \ref{sect2}, this part may be well-known to experts. In section \ref{sect3}, we obtain the global Yamabe flows on AF manifolds. We show or give an outline proof of Theorems \ref{main_5} and \ref{main_6}.  In the appendix section \ref{sect4}, we discuss the general version of maximum principle, which may be used in the space decaying argument of the Yamabe flows on AF manifolds.

\section{Yamabe flow: local existence}\label{sect2}

Let $(M,g_M)$ be a complete Riemannian manifold of dimension $n=dim(M)$. Given an initial metric $g_0=w_0g_M$ where $w_0>0$ is a fine function on $M$. Let
$R_0=R(g_0)$ be the scalar curvature of the initial Riemannian metric $g_0$. The following local existence results of the solutions to the Yamabe flow  \eqref{eqn:Yamabe-flow} may be well-known for experts \cite{AM} (see also Theorem 2.4 in \cite{CZ}), but it is new.

\begin{Thm}\label{short_existence} Let $(M,g_M)$ be an $n$-dimensional complete manifold with bounded scalar curvature and let $g_0=w_0g_M$, where $w_0>0$ is a fine function on $M$.
Then Yamabe flow (\ref{eqn:Yamabe-flow}) below with initial metric $g_0$ has
a smooth solution on a maximal time interval $[0,T_{max})$ with $T_{max}>0$ such that either $T_{max}=+\infty$ or the evolving metric contracts to a point at finite time $T_{max}$.
\end{Thm}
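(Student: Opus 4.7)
The plan is to invoke the inverse function theorem in parabolic H\"older spaces, as suggested in the introduction, applied to the nonlinear map
\[
\Phi(w) := \bigl(w_t - B[w],\ w(\cdot,0)\bigr)
\]
based at the constant-in-time extension $w(x,t)\equiv w_0(x)$ of the initial datum. Concretely, I would work on a slab $M\times[0,T]$ in a parabolic H\"older space $X = C^{2+\alpha,\,1+\alpha/2}(M\times[0,T])$, with target $Y\times Z = C^{\alpha,\,\alpha/2}(M\times[0,T]) \times C^{2+\alpha}(M)$. The fineness of $w_0$ (so that $0<\delta\leq w_0\leq C$ and $|\nabla_{g_M} w_0|\leq C$) together with the assumed boundedness of $R(g_M)$ ensures that $\Phi$ is well-defined and continuously differentiable on a small neighbourhood of $w_0$ in $X$, and that the leading symbol of its linearization is uniformly elliptic on $M$.

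The first concrete step is to compute the Fr\'echet derivative $D\Phi(w_0)$, which produces a linear uniformly parabolic operator
\[
L_{w_0}\varphi := \varphi_t - \tfrac{n-1}{w_0(x)}\Delta_{g_M}\varphi + (\text{bounded lower order terms}),
\]
paired with the trace at $t=0$. Isomorphism of $(L_{w_0},\,\mathrm{tr}_{t=0})$ between the chosen Banach spaces is the essential input. I would establish it by covering $M$ with unit geodesic balls on which interior Schauder estimates apply with constants depending only on ellipticity and on H\"older norms of the coefficients (hence only on the fineness constants of $w_0$ and on $\|R(g_M)\|_\infty$), combining them via a partition of unity, and solving the initial value problem by exhausting $M$ with compact domains under homogeneous lateral boundary conditions and passing to a limit through the uniform estimates. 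With invertibility established, the inverse function theorem furnishes a unique solution $w\in X$ on $[0,T_1]$ for some $T_1>0$ depending only on the fineness constants of $w_0$ and on $\|R(g_M)\|_\infty$; standard parabolic bootstrap upgrades $w$ to $C^\infty$ for $t>0$ as long as $w$ stays positive.

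The maximal time statement then follows by a standard continuation argument. Define $T_{\max}$ as the supremum of $T>0$ for which a fine solution exists on $[0,T)$. If $T_{\max}<\infty$ but the fine constants persist up to $T_{\max}$, then restarting the short-time theorem near $t=T_{\max}$ contradicts maximality; hence either $\inf_M w(\cdot,t)\to 0$ or $\sup_M(w+|\nabla w|)(\cdot,t)\to\infty$ as $t\to T_{\max}$. A maximum-principle estimate applied to $w_t=-wR$ (cf.\ the general maximum principle discussed in Section \ref{sect4}) bounds $w$ from above on any bounded time interval, and parabolic gradient estimates tie any gradient blow-up to the decay $\inf_M w\to 0$. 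The only remaining alternative at $T_{\max}<\infty$ is therefore $\inf_M w(\cdot,t)\to 0$, which is precisely the assertion that the evolving metric contracts to a point.

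The principal obstacle is arranging the inverse function theorem uniformly in space on the non-compact manifold $M$. Uniform-in-$x$ Schauder estimates and a partition of unity with controlled derivatives usually require some bounded-geometry input on $(M,g_M)$; here the fineness of $w_0$ and the boundedness of $R(g_M)$ must be exploited carefully so that the constants in the estimates remain independent of the point on $M$, making the standard compact-manifold IFT scheme carry through with only cosmetic modifications.
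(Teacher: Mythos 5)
Your route differs from the paper's in a structurally important way, and the difference is exactly where your argument has a gap. The paper does \emph{not} run the inverse function theorem globally on $M$. It runs the IFT only on smooth bounded domains $\Omega\subset M$ (Lemma \ref{lem:shorttimeexistence}), where the linear theory of Ladyzhenskaya--Solonnikov--Ural'tseva gives the isomorphism of the linearized operator for free, and where the only delicate point is showing that a suitably cut-off right-hand side $\eta f$ lands in the IFT neighbourhood. It then exhausts $M$ by domains $\Omega_m$, solves the Dirichlet problem \eqref{yamabe_flow_u_local1} on each with boundary value $1$, uses the maximum principle to get the explicit two-sided barriers $\bigl(1\pm\tfrac{n-2}{(n-1)(n+2)}\sup|R_{g_0}|\,t\bigr)^{\frac{n-2}{4}}$ --- whence a \emph{uniform} existence time $T$ determined only by $\sup|R_{g_0}|$ --- and finally applies interior Trudinger/Krylov--Safonov and Schauder estimates on balls plus a diagonal argument to extract the limit solution on all of $M$. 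The nonlinear functional-analytic machinery is thus confined to compact pieces, and the passage to the non-compact manifold is done purely by a priori estimates and compactness.

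Your proposal instead asks for $D\Phi(w_0)$ to be an isomorphism between global parabolic H\"older spaces on the non-compact slab $M\times[0,T]$. This is the genuine gap: with only completeness and bounded scalar curvature (no injectivity radius lower bound, no full curvature or Ricci bounds), you do not have uniform control of the metric coefficients in charts of a definite size, so the ``constants independent of the point'' in your patched Schauder estimates are not available; and even granting existence, surjectivity and injectivity of the linear solution operator on the class of bounded (H\"older) functions on a general complete manifold is not automatic --- uniqueness of bounded solutions of a parabolic Cauchy problem already requires a volume-growth or curvature hypothesis of the type invoked in Theorem \ref{Ecker_Huisken}, none of which follows from a scalar curvature bound. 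You flag this as ``the principal obstacle'' but the fix is not cosmetic; it is precisely what the paper's exhaustion-by-Dirichlet-problems scheme is designed to avoid. A secondary soft spot is your continuation argument: the claim that ``parabolic gradient estimates tie any gradient blow-up to the decay $\inf_M w\to 0$'' is asserted, not proved, whereas the paper extracts the uniform existence time directly from the maximum-principle barriers before any gradient estimate is needed. If you replace your global IFT step by the paper's two-stage scheme (bounded-domain IFT, then exhaustion with the explicit barriers), the rest of your outline goes through.
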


Since the assumption above is weaker than previous existence result, one can not expects the uniqueness of the Yamabe flow.
We shall use the formulation from the interesting paper \cite{S1}.
The plan of the proof is to get the local existence result of the Yamabe flow on the Riemannian manifold $(M,g_M)$ by considering the evolution equation in the following form (\cite{AM})
\begin{align}\label{eqn:Yamabe-flow}
\frac{1}{n-1}w_t=-\frac{wR}{n-1}=-\frac{R_0}{n-1}+\frac{\Delta_{g_{M}} w}{w}+\frac{(n-6)}{4}\frac{|\nabla w|_{g_{M}}^2}{w^2},
\end{align}
with $w(0)=w_0$.
We denote the terms of right side of the equation \eqref{eqn:Yamabe-flow} by
\begin{align*}
B[w]:=(n-1)\Bigl(-\frac{R_0}{n-1}+\frac{\Delta_{g_{M}} w}{w}+\frac{(n-6)}{4}\frac{|\nabla w|_{g_{M}}^2}{w^2}\Bigr).
\end{align*}

We first set up the local existence result on any bounded domain with uniform time interval.
Given a smooth, bounded domain $\Omega\subset M$ and $T>0$, we may assume that $-R_0\geq n c$ for some constant $c$ and we consider the problem
\begin{align}\label{eqn:pde}
\left\{
\begin{aligned}
\frac{\partial w}{\partial t}&=B[w]
&&\text{ in $\Omega\times[0,T]$, } \\
w&=\phi
&&\text{ on $\partial\Omega\times[0,T]$, }  \\[.5ex]
w&=w_0
&&\text{ on $\Omega\times\{0\}$}
\end{aligned}\right.
\end{align}
for given $0<w_0\in C^{2,\alpha}(\overline{\Omega})$ and $\phi\in C^{2,\alpha;1,\frac{\alpha}{2}}(\partial\Omega\times[0,T])$ satisfying $\phi(\cdot,t)=w_0$ on $\partial\Omega\times[0,T]$
Since $w_0$ and $R_{g_0}$ are bounded on the compact set $\partial\Omega$ and $w_0>0$, the nonlinear term $B[w]$ is well-defined at the initial time.

By the standard parabolic theory \cite{Ladyzenskaja1967} we may solve
 the linear parabolic problem
\begin{align}\label{eqn:pde-linear-u}
\left\{\begin{aligned}
\frac{1}{n-1}\frac{\partial\tilde{u}}{\partial t}-\frac{\Delta_{g_{M}}\tilde{u}}{w_0}
-\frac{(n-6)}{4}\frac{<\nabla\tilde{u},\nabla w_0>_{g_{M}}}{w_0^2}
&=-\frac{R_0}{n-1}
&&\text{ in $\Omega\times[0,T]$, } \\
\tilde{u}&=\phi
&&\text{ on $\partial\Omega\times[0,T]$, }  \\[.5ex]
\tilde{u}&=w_0
&&\text{ on $\Omega\times\{0\}$. }
\end{aligned}\right.
\end{align}
to get the solution $\tilde{u}$. Since $\Omega$ is bounded and since $w_0>0$ in $M$,
there exists some $\delta>0$ depending on $\Omega$ and $w_0$ such that $w_0\geq \delta$ in $\Omega\times [0,T]$.
Therefore, equation \eqref{eqn:pde-linear-u} is uniformly parabolic with regular coefficients and the initial-boundary conditions are satisfied.
According to linear parabolic theory \cite{Ladyzenskaja1967}[IV.5, Theorem 5.2], problem \eqref{eqn:pde-linear-u} has a unique solution $\tilde{u}\in C^{2,\alpha;1,\frac{\alpha}{2}}(\overline{\Omega}\times[0,T])$.
Since $w_0>0$ and $\phi(\cdot,t)=w_0$ for all $t\in[0,T]$, the parabolic maximum principle  applied to $\tilde{u}(\cdot,t)$ implies $\tilde{u}\geq\varepsilon$ on $\Omega\times[0,T]$ for some $\varepsilon>0$ depending on $\Omega$ and $\tilde{u}$.
For the short time $t>0$, we want to get the solution $w$ to \eqref{eqn:pde} which will be close to the function $\tilde{u}$.

We shall use the inverse function theorem to construct the short time solution to  \eqref{eqn:pde}  on any bounded domain.
\begin{Lem}[Short-time existence on bounded domains]
\label{lem:shorttimeexistence}
Let $\Omega\subset M$ be a smooth bounded domain in $(M, g_M)$.
Then there exists $T>0$ such that problem \eqref{eqn:pde} has a unique solution.
\end{Lem}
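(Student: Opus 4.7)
The plan is to apply the inverse function theorem in a pair of parabolic Hölder spaces, taking the solution $\tilde u$ of the linearised problem \eqref{eqn:pde-linear-u} as the base point. Fix $T>0$ (to be chosen small) and set
\[
X:=C^{2+\alpha,\,1+\alpha/2}(\overline{\Omega}\times[0,T]),\qquad Y:=C^{\alpha,\,\alpha/2}(\overline{\Omega}\times[0,T]).
\]
Let $X_\phi\subset X$ be the closed affine subspace of functions satisfying $w=w_0$ on $\Omega\times\{0\}$ and $w=\phi$ on $\partial\Omega\times[0,T]$ (the compatibility $\phi(\cdot,0)=w_0$ on $\partial\Omega$ is part of the hypotheses), with tangent space $X_0$ consisting of functions vanishing on the parabolic boundary. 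On the open set $U:=\{w\in X_\phi:w>0\}$ define
\[
F:U\to Y,\qquad F(w):=\partial_t w-B[w].
\]
A function $w\in U$ solves \eqref{eqn:pde} if and only if $F(w)=0$.

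Since $B$ is a smooth rational expression in $w,\nabla w,\Delta w$ on $\{w>0\}$, the map $F$ is $C^1$ (in fact $C^\infty$) on $U$, with Fréchet derivative at $\tilde u$ given by
\[
DF(\tilde u)\,v=\partial_t v-\frac{n-1}{\tilde u}\,\Delta_{g_M}v-\frac{(n-1)(n-6)}{2}\,\frac{\langle\nabla\tilde u,\nabla v\rangle_{g_M}}{\tilde u^2}+c(x,t)\,v,
\]
where the zeroth order coefficient $c$ lies in $C^{\alpha,\,\alpha/2}$ because $\tilde u\in C^{2+\alpha,\,1+\alpha/2}$ and $\tilde u\geq\varepsilon>0$ from the maximum principle lower bound already recorded in the excerpt. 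Hence $DF(\tilde u)$ is a uniformly parabolic operator with Hölder continuous coefficients, and by the linear theory of \cite{Ladyzenskaja1967}[IV.5, Theorem 5.2] the map $DF(\tilde u):X_0\to Y$ is a Banach space isomorphism.

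Next I would observe that the residual $F(\tilde u)\in Y$ vanishes on $\Omega\times\{0\}$: at $t=0$ we have $\tilde u=w_0$, so $B[\tilde u]|_{t=0}=B[w_0]$, while evaluating \eqref{eqn:pde-linear-u} at $t=0$ forces $\partial_t\tilde u|_{t=0}=B[w_0]$ (the frozen-coefficient linear operator coincides with $B$ at $w=w_0$). Consequently $\|F(\tilde u)\|_{C^0(\overline{\Omega}\times[0,T])}=O(T^{\alpha/2})\to 0$ as $T\downarrow 0$. The inverse function theorem then produces a $C^1$ local inverse of $F$ near $\tilde u$, and for $T$ sufficiently small the origin of $Y$ lies in its image; its preimage is the desired solution $w\in U$. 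Local bijectivity of $F$ gives uniqueness in a neighbourhood of $\tilde u$, and the pointwise bound $w>0$ persists because $w$ is close to $\tilde u\geq\varepsilon$ in $C^0$.

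The only delicate point is the quantitative passage from $\|F(\tilde u)\|_{C^0}\to 0$ to smallness in the full Hölder norm on $Y$, since the parabolic Hölder seminorm need not shrink with $T$. The standard remedy is to replace the bare inverse function theorem by an equivalent Banach contraction: writing $w=\tilde u+v$ with $v\in X_0$, one looks for a fixed point of
\[
\Phi(v):=DF(\tilde u)^{-1}\Bigl(-F(\tilde u)+B[\tilde u+v]-B[\tilde u]-DB[\tilde u]\,v\Bigr),
\]
exploits the quadratic estimate $\|B[\tilde u+v]-B[\tilde u]-DB[\tilde u]\,v\|_Y\leq C\|v\|_X^{2}$ on a small ball of $X_0$, and then shrinks $T$ to make $\|F(\tilde u)\|_Y$ as small as required. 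For $T$ sufficiently small $\Phi$ becomes a contraction on $\{v\in X_0:\|v\|_X\leq\rho\}$, and its unique fixed point gives the short time solution of \eqref{eqn:pde}.
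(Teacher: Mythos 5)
Your proposal is correct in substance and follows the same basic strategy as the paper: linearize at the solution $\tilde u$ of the frozen-coefficient problem \eqref{eqn:pde-linear-u}, invert the resulting uniformly parabolic operator by Schauder theory, and recover the nonlinear solution by an inverse-function-type argument, using the compatibility identity $\partial_t\tilde u|_{t=0}=B[w_0]$ so that the residual $F(\tilde u)$ vanishes at $t=0$. Where you diverge is the final step. The paper keeps $T$ fixed, applies the inverse function theorem once to get a fixed neighborhood $V_0$ of $F(0)$, and then shows that the time-truncated residual $\eta f$ (with $\eta$ a cutoff vanishing for $t\le\varepsilon$) lies in $V_0$, by interpolating the H\"older seminorms of $f-\eta f$ against $|f|_{C^1}$ and the fact that $f(\cdot,0)=0$; the preimage $F^{-1}(\eta f)$ then solves the equation on $[0,\varepsilon]$. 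You instead shrink $T$ itself and run a quantitative contraction for $\Phi(v)=DF(\tilde u)^{-1}\bigl(-F(\tilde u)+Q(v)\bigr)$. Both routes work, but note two things. First, the ``delicate point'' you flag is in fact resolved by exactly the interpolation the paper carries out: since $f\in C^1$ and $f(\cdot,0)\equiv 0$ on $\overline\Omega$, one has $\|f\|_{C^0(\overline\Omega\times[0,T])}\le T|f|_{C^1}$ and hence $[f]_{\alpha/2,t}\le |f|_{C^1}T^{1-\alpha/2}$ and $[f]_{\alpha,x}\le C|f|_{C^1}T^{1-\alpha}$, so $\|F(\tilde u)\|_Y\to 0$ as $T\downarrow 0$ without any further device. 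Second, your route does require, and should state, that the constants entering the contraction --- the operator norm of $DF(\tilde u)^{-1}:Y\to X_0$ and the constant in the quadratic estimate for $Q$ --- are uniform as $T\downarrow 0$; this is standard (Schauder constants do not degenerate for short times, and $Q$ is controlled on $\{\|v\|_{C^0}\le\varepsilon/2\}$ where $\tilde u+v$ stays bounded away from zero), but it is the price of shrinking $T$, which the paper's fixed-$T$ cutoff trick avoids entirely. You should also impose on the target space $Y$ the compatibility condition $f(\cdot,0)=0$ on $\partial\Omega$, as the paper does, since otherwise $DF(\tilde u):X_0\to Y$ is not onto.
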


\begin{proof}
We shall construct a solution $w$ to \eqref{eqn:pde} is of the form $w=\tilde{u}+v$, where $\tilde{u}$ solves \eqref{eqn:pde-linear-u} and
\begin{align}\label{eqn:pde-v}
\left\{\begin{aligned}
\frac{\partial v}{\partial t}
&=B[\tilde{u}+v]-\frac{\partial \tilde{u}}{\partial t}
&&\text{ in $\Omega\times[0,T]$, } \\
v&=0
&&\text{ on $\partial\Omega\times[0,T]$, }  \\[.5ex]
v&=0
&&\text{ on $\Omega\times\{0\}$. }
\end{aligned}\right.
\end{align}
For the H\"older exponent $0<\alpha<1$, we define the working space
\begin{align*}
X&:=\{v\in C^{2,\alpha;1,\frac{\alpha}{2}}(\overline{\Omega}\times[0,T])
\mid{}v=0\text{ on }(\Omega\times\{0\})\cup(\partial\Omega\times[0,T])\},
\\[1ex]
Y&:=\{f\in
C^{0,\alpha;0,\frac{\alpha}{2}}(\overline{\Omega}\times[0,T])
\mid{}f=0\text{ on }\partial\Omega\times\{0\}\}.
\end{align*}
Notice that the map
$F: X\to Y$,
\begin{align*}
F: v\mapsto \frac{\partial}{\partial t}(\tilde{u}+v)-B[\tilde{u}+v].
\end{align*}
 is well-defined because the initial-boundary conditions imply that at every $p\in\partial\Omega$ for every $v\in X$, we have
\begin{align*}
(F v)(p,0)&=\Bigl(\frac{\partial\tilde{u}}{\partial t}-B[\tilde{u}]\Bigr)(p,0)
=\Bigl(\frac{\partial\phi}{\partial t}(\cdot,0)-B[u_0]\Bigr)(p)=0.
\end{align*}
The linearization of $B[\tilde{u}]$ around $\tilde{u}\in C^{2,\alpha;1,\frac{\alpha}{2}}(\overline{\Omega}\times[0,T])$ gives the linear operator
\begin{align*}
\breve{L}(\tilde{u})&=(n-1)\Bigl(-\frac{\Delta_{g_{M}}\tilde{u}}{\tilde{u}^2}
-\frac{(n-6)}{2}\frac{|\nabla\tilde{u}|_{g_{M}}^2}{\tilde{u}^3}+
\frac{(n-6)}{2\tilde{u}^2}<\nabla\tilde{u},\nabla\,\cdot\,>_{g_{M}}
+\frac{\Delta_{g_{M}}}{\tilde{u}}
\Bigr).
\end{align*}

We claim that the map $F$ is Fr\'echet differentiable at $0\in X$. The reason is below.
First, the map $F$ is G\^ateaux differentiable at $0\in X$ with derivative
\begin{align*}
D F(0)\colon X&\to Y
\\
w&\mapsto \frac{\partial}{\partial t}w-L(\tilde{u})w.
\end{align*}
Second, the mapping $u\mapsto \breve{L}(u)$ is continuous near $\tilde{u}$ because $\tilde{u}$ is bounded away from zero.
Hence, $DF(0)$ is the Fr\'{e}chet-derivative of $S$ at $0\in X$.
Note that the linear operator $\frac{\partial}{\partial t}-\breve{L}(\tilde{u})$ is uniformly parabolic.

Let $f\in Y$ be an arbitrary element.
By definition, $0=f(\cdot,0)$ on $\partial\Omega$. We consider the linear parabolic problem
\begin{align}\label{eqn:pde-linear}
\left\{\begin{aligned}
\frac{\partial w}{\partial t}-\breve{L}(\tilde{u})w&=f
&&\text{ in $\Omega\times[0,T]$, } \\
w&=0
&&\text{ on $\partial\Omega\times[0,T]$, } \\[.5ex]
w&=0
&&\text{ on $\Omega\times\{0\}$. }
\end{aligned}\right.
\end{align}
As before, linear parabolic theory guarantees that \eqref{eqn:pde-linear} has a unique solution $w\in X$.
Hence, the continuous linear map $DF(0)\colon X\to Y$ is invertible.

By the Inverse Function Theorem, $F$ is invertible in some neighborhood $V_0\subset Y$ of $F(0)$.
Claim that $V_0$ contains an element $h$ such that $h(\cdot,t)=0$ for $0\leq t\leq\varepsilon$ and sufficiently small $\varepsilon>0$.
Fix $f:=F(0)=\frac{\partial}{\partial t}\tilde{u}-B[\tilde{u}]$.
Choose $\eta: [0,T]\to[0,1]$, a smooth cutoff function such that
\begin{align*}
\eta(t)&=\begin{cases}
0, &\text{ for } t\leq\varepsilon, \\
1, &\text{ for } t>2\varepsilon,
\end{cases}&
0&\leq\frac{d\eta}{d t}\leq\frac{3}{\varepsilon}.
\end{align*}
Note that $\eta f\in V_0$ for sufficiently small $\varepsilon>0$. In fact,
since $\tilde{u}$ is smooth in $\overline{\Omega}\times[0,T]$, we have $f\in C^{1}(\overline{\Omega}\times[0,T])$.
Noting at $t=0$, we have
\begin{align}\label{est:f-hoelder1}
f(\cdot,0)&=\frac{\partial\tilde{u}}{\partial t}(\cdot,0)-B[w_0]=0 \quad\text{ on $\overline{\Omega}$, }
\end{align}
we may estimate
\begin{align}\label{eqn:sC1}
|{f(\cdot,s)}|=|f(\cdot,s)-f(\cdot,0)|\leq s|{f}|_{C^1(\overline{\Omega}\times[0,T])}.
\end{align}
Take $t,s\in[0,T]$ and $t>s$.
For $s>2\varepsilon$, we have
$$(f-\eta f)(\cdot,s)=(f-\eta f)(\cdot,t)=0.$$
Then we may assume $s\leq2\varepsilon$.
In this case we may estimate the time difference of the function $(f-\eta f)$ in the following way.
\begin{align}\nonumber
&|(f-\eta f)(\cdot,t)-(f-\eta f)(\cdot,s)|
\\ \nonumber
&\leq |f(\cdot,t)-f(\cdot,s)|
+|\eta f(\cdot,t)-\eta f(\cdot,s)|
\\ \nonumber
&\leq
\bigl(1+|\eta(t)|\bigr)|f(\cdot,t)-f(\cdot,s)|
+|f(\cdot,s)||\eta(t)-\eta(s)|
\\ \nonumber
&\leq2 |f|_{C^1}|t-s|+s|{f}|_{C^1}|\eta'|_{C^0}|{t-s}|
\\
&\leq\bigl(2+s\tfrac{3}{\varepsilon}\bigr)|f|_{C^1}|{t-s}|
\\ \label{est:f-hoelder3}
&\leq 8 |{f}|_{C^1}|{t-s}|.
\end{align}
By \eqref{est:f-hoelder1}, we may reduce the special case $s=0$ to the bound
\begin{align}\label{eqn:20190125-3}
|(f-\eta f)(\cdot,t)|
&\leq8t |{f}|_{C^1}.
\end{align}
Since the left-hand side of \eqref{eqn:20190125-3} vanishes for $t>2\varepsilon$, we may have
\begin{align*}
|f-\eta f|_{C^0}
&\leq 16\varepsilon |f|_{C^1}.
\end{align*}
If $|t-s|<\varepsilon$, the estimate \eqref{est:f-hoelder3} implies that
\begin{align*}
|(f-\eta f)(\cdot,t)-(f-\eta f)(\cdot,s)|
&\leq 8\varepsilon^{1-\frac{\alpha}{2}}|f|_{C^1}|{t-s}|^{\frac{\alpha}{2}}.
\end{align*}
If $|{t-s}|\geq\varepsilon$, we may replace the estimate by the fact that
\begin{align*}
|(f-\eta f)(\cdot,t)-(f-\eta f)(\cdot,s)|
&\leq 2|f-\eta f|_{C^0}
\\ \nonumber
&\leq 32\varepsilon |{f}|_{C^1}
\\ \nonumber
&\leq 32\varepsilon^{1-\frac{\alpha}{2}}|f|_{C^1}|t-s|^{\frac{\alpha}{2}}.
\end{align*}
Then, $$[f-\eta f]_{\frac{\alpha}{2},t}\leq32\varepsilon^{1-\frac{\alpha}{2}}|f|_{C^1}.$$

For the estimation of the spatial H\"older seminorm, we may obtain a similar estimate from \eqref{eqn:sC1} and  estimate the space difference:
\begin{align*}
&|{(f-\eta f)(x,t)-(f-\eta f)(y,t)}|
\\ \nonumber
&\leq |{1-\eta(t)}||{f(x,t)-f(y,t)}|^{\alpha}|{f(x,t)-f(y,t)}^{1-\alpha}|
\\ \nonumber
&\leq |{f}|_{C^1}^{\alpha}\,d(x,y)^{\alpha}\bigl(4\varepsilon |{f}|_{C^1}\bigr)^{1-\alpha}
=(4\varepsilon)^{1-\alpha}|{f}|_{C^1}\,d(x,y)^{\alpha},
\end{align*}
where $d(x,y)$ is the Riemannian distance between $x$ and $y$ in $(M,g_{M})$.
Then, $$|{f-\eta f}|_{Y}\leq C\varepsilon^{\beta-\alpha}|{f}|_{C^1}.$$
This implies that $\eta f$ belongs to the neighborhood $V_0$ of $f$ if  $\varepsilon>0$ is sufficiently small.
By the construction above, $F^{-1}(\eta f)$ is a solution to \eqref{eqn:pde-v} in $\Omega\times[0,\varepsilon]$.
Setting $T=\varepsilon>0$, we then obtain the desired result.
\end{proof}

We are now going to prove Theorem \ref{short_existence}.
\begin{proof}
We now may obtain the local in time solution to \eqref{eqn:Yamabe-flow} on the whole Riemannian manifold $(M,g_M)$.
Recall that we have assumed the scalar curvature of $g_0=w_0g_M$ is bounded.  Let $\Omega_1\subset\Omega_2\subset\cdots$ be the smooth compact domain exhaustion of $M$ (the existence of such domain exhaustion was used in \cite{D}). Recall that $p=\frac{n+2}{n-2}$, $L_{g_0}v=\Delta_{g_0}v-aR_{g_0}v$ and
$a=\frac{n-2}{4(n-1)}$. We may write $g_0=v_0^{4/(n-2)}g_M$ for $w_0=v_0^{4/(n-2)}$ and look for for solution of the form
$$g(x,t)=\check{u}^{4/(n-2)}g_0=(\check{u}{v_0})^{4/(n-2)}g_M=v^{4/(n-2)}g_M=ug_M.
$$
Then the Yamabe flow equation may be written as
$$
  \frac{\partial v^p}{\partial t}=L_{g_M}v, \quad \ x\in M, \ t>0,
$$
with the initial data
$v(0)=v_0$. Recall that $L_{g_M}v =\Delta_{g_M}v-aR_{g_M}v$ in $M$.
 For shortening the notation, we may assume $v_0=1$ and then $g_0=g_M$.
Then, the solution $\check{u}(x,t)$ to Yamabe flow \eqref{eqn:Yamabe-flow} may be obtained by a sequence of approximation solutions
$u_m(x,t)=\check{u}_m(x,t)^{4/(n-2}$ obtained above. Note that $\check{u}_m(x,t)$ satisfies
\begin{equation}\label{yamabe_flow_u_local1}
\left\{
\begin{array}{ll}
         \frac{\partial \check{u}^p_m}{\partial t}=L_{g_0}\check{u}_m, \quad &\ x\in\Omega_m, \ t>0,\\
                   \check{u}_m(x,t)>0, \quad &\ x\in\Omega_m, \ t>0,\\
                   \check{u}_m(x,t)=1, \quad &\ x\in \partial \Omega_m, \ t>0,\\
                  \check{u}_m(\cdot,0)=1, \quad &\ x\in\Omega_m.\\
\end{array}
\right.
\end{equation}
 Since  $\check{u}_m(x,t)=1$ is bounded on $\partial \Omega_m$, by the maximum principle, we may conclude that
\begin{align*}
\max\limits_{\Omega_m} \check{u}_m(t)\leq (1+\frac{n-2}{(n-1)(n+2)}\sup\limits_{M^n}|R_{g_0}|t)^{\frac{n-2}{4}}.
\end{align*}
and
\begin{align*}
\min\limits_{\Omega_m} \check{u}_m(t)\geq (1-\frac{n-2}{(n-1)(n+2)}\sup\limits_{M^n}|R_{g_0}|t)^{\frac{n-2}{4}}.
\end{align*}
We see that $\check{u}_m(t)$ has an uniformly upper bound on $[0,t_0)$ for any $t_0>0$ and uniformly positive lower bound on $[0,\frac{(n-1)(n+2)}{2(n-2)\sup\limits_{M^n}|R_{g_0}|}]$. Let $T=\frac{(n-1)(n+2)}{2(n-2)\sup\limits_{M^n}|R_{g_0}|}$. Then every local solution $\{u_m\}$ is well-defined on the time interval $[0,T]$.

Applying  Trudinger's estimate \cite{Trudinger1968} (or the Krylov-Safonov estimate) and Schauder estimate of parabolic equations to \eqref{eqn:Yamabe-flow} on any ball $B_{g_0}(p,r_0)\subset (M,g_0)$, we have
$||u_m||_{C^{2+\alpha,1+\frac{\alpha}{2}}(B_{g_0}(p,r_0)\times [0,T])}\leq C$, where $C$ is independent of the point $p$.
Using the diagonal subsequence of $\{u_m\}$, we may extract a $C^{2+\alpha,1+\frac{\alpha}{2}}_{loc}$ convergent sequence with its positive limit $u(x,t)=\check{u}^{\frac{4}{n-2}}$ on whole $M$, which is the desired local in time solution to \eqref{eqn:Yamabe-flow}.
Since $g(\cdot,t)=\check{u}^{\frac{4}{n-2}}g_0$, we also have $\sup\limits_{B_{g_0}(p,r_0)\times [0,T]}|Rm(x,t)|\leq C$.
With this understanding, we may extend the solution to the maximal time solution as we wanted.

This completes the proof of the result.
\end{proof}

\section{global Yamabe flows on ALE manifolds}\label{sect3}

  Assume that $(M,g_0)$ is an ALE manifold. We shall show that the Yamabe flow exists globally.  Assume by contrary that the maximal time of Yamabe flow is finite, i.e., $T_{max}<\infty$. Then according to AF property of the solution in Theorem 5.1 \cite{CZ} (and its argument is based on the generalized maximum principle Theorem \ref{Ecker_Huisken} as showed in appendix), we know that there exists a compact set $S\subset M$ such that
  $$
  \frac12\leq u(x,t)\leq 3/4, \ \ \forall (x,t)\in (M\setminus S)\times [0,T_{max})
  $$
  and there is a point $x\in S$ such that $u$ is non-trivial in a neighborhood of $x$.
  As in  \cite{Y94}, using DiBennedetto's estimate we may extend the solution $u$ continuously to $T_{max}$. Using Ye's argument, we know that $u$ can not have any zero point in $S$ at $T_{max}$. Therefore, there exist two positive constants $c_1$ and $c_2$ such that
  $$
  c_1\leq u(x,t)\leq c_2, \ \ \forall (x,t)\in M\times [0,T_{max}] \ \text{uniformly} .
  $$
  Then we may use the standard parabolic theory \cite{Trudinger1968} to extend the solution beyond $T_{max}$, which is a contradiction with $T_{max}<\infty$. The decaying property of the Yamabe flow follows from Theorem 5.1 in \cite{CZ}.
  This then completes the proof of Theorem \ref{global}.

The proof of  Theorem \ref{main_5} follows from the application of Theorem \ref{short_existence}, Theorem 5.1 in \cite{CZ}, and Theorem 6 in \cite{M}.

Since the proof of Theorem \ref{main_6} is by now easy to give and the proof is below.
\begin{proof}
 We choose $\delta>0$ small such that $\tilde{u}_0=\delta w_0<1$. Note that $\tilde{u}_0$ is the lower solution of the Yamabe flow.
  Let $\tilde{g}_0= \tilde{u}_0^{4/(n-2)}g_0$. Then  the scalar curvature of the metric $\tilde{g}_0$ is non-negative and we also have
$$
g_0\geq \tilde{g}_0.
$$
Note that along the Yamabe flow $g(t)\geq \tilde{g}_0$ and by the maximum principle we know that the scalar curvature $R=R(g(t))\geq 0$ on $M$. Since
$$
\frac{\partial g}{\partial t}=-Rg \leq 0,
$$
We then know that
the Yamabe flow $g(t)$ converges in $C^\infty_{loc}(M)$ to a Yamabe metric of  scalar curvature zero.
\end{proof}

\section{Appendix: the maximum principle}\label{sect4}
In this section, we present a generalized version of the maximum principle
(see Theorem 4.3 in \cite{EH} and Theorem 2.6 in \cite{CZ}), where they consider
the maximum principle for the parabolic equation $\frac{\partial }{\partial t}v - \Delta v\leq b\cdot \nabla v+cv$ or
 $\frac{\partial }{\partial t}v -  div(a \nabla v)\leq b\cdot \nabla v+cv$
on noncompact manifolds, where $\Delta$ and $\nabla$ depend on $g(t)$. Our maximum principle is about the more general equation
 $$m(x) \frac{\partial }{\partial t}v -  div(a \nabla v)\leq b\cdot \nabla v+cv, \ \ M\times [0,T)$$
where $m(x)$ is a positive regular function on $M$.

\begin{Thm}\label{Ecker_Huisken}
Suppose that the complete noncompact manifold $M^n$ with Riemannian metric $g(t)$ satisfies the uniformly volume
growth condition
\begin{align*}
    vol_{g(t)}(B_{g(t)}(p,r))\leq exp(k(1+r^2))
\end{align*}
for some point $p\in M$ and a uniform constant $k>0$ for all $t\in [0,T]$. Let $v$ be a differentiable function on $M\times (0,T]$ and
continuous on $M\times [0,T]$. Assume that $v$ and $g(t)$ satisfy

(i) The differential inequality
\begin{align*}
    m(x)\frac{\partial }{\partial t}v - div(a \nabla v)\leq b\cdot \nabla v+cv,
\end{align*}
where $m(x)$ is a positive continuous function on $M$ such that $0<m_0\leq m(x)\leq m_1$ for some constant $m_0>0$ and $m_1>0$, the vector field $b$ and the function $a$ and $c$ are uniformly bounded
\begin{align*}
    0<\alpha_1' \leq a\leq \alpha_1, \sup\limits_{M\times [0,T]} |b|\leq \alpha_2, \sup\limits_{M\times [0,T]} |c|\leq \alpha_3,
\end{align*}
for some constants $\alpha_1',\alpha_1,\alpha_2<\infty$. Here $\Delta$ and $\nabla$ depend on $g(t)$.

(ii) The initial data
\begin{align*}
    v(p,0)\leq 0,
\end{align*}
for all $p\in M$.

(iii) The growth condition
\begin{align*}
    \int^T_0(\int_{M}exp[-\alpha_4 d_{g(t)}(p,y)^2]|\nabla v|^2(y)d \mu_t)dt<\infty.
\end{align*}
for some constant $\alpha_4>0$.

(iv) Bounded variation condition in metrics in the sense that
\begin{align*}
    \sup\limits_{M\times[0,T]}|\frac{\partial}{\partial t}g(t)|\leq \alpha_5
\end{align*}
for some constant $\alpha_5<\infty$.

Then we have
$ v\leq 0 $
on $M\times [0,T]$.
\end{Thm}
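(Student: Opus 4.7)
The plan is to adapt an Ecker--Huisken type weighted energy argument to the present setting, accommodating two new features: the positive weight $m(x)$ in front of $\partial_t v$, and the $t$-dependence of $g$, $d_{g(t)}$ and $d\mu_t$. First I would perform an exponential shift $\tilde v := e^{-\gamma t} v$ with $\gamma > \alpha_3/m_0 + 1$ so that the inequality in (i) becomes
$$ m(x)\,\partial_t \tilde v - \operatorname{div}(a\nabla \tilde v) \le b\cdot\nabla \tilde v - \tilde v, $$
and replace $\tilde v$ by $w := \tilde v_+ = \max(\tilde v,0)$; by (ii) we have $w(\cdot,0)\equiv 0$, and the goal reduces to $w\equiv 0$.

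Next I would introduce a spatial cutoff $\eta_R\in C_c^\infty(M)$ equal to $1$ on $B_{g(0)}(p,R)$, supported in $B_{g(0)}(p,2R)$, with $|\nabla_{g(0)}\eta_R|\le 2/R$, together with a Gaussian weight
$$ \phi(x,t) := \exp\bigl(-\beta\, d_{g(t)}(p,x)^2\bigr), $$
where $\beta>\alpha_4$ is to be fixed. By (iv) one has $|\partial_t d_{g(t)}^2|\le C\alpha_5\, d_{g(t)}^2$ and $|\partial_t d\mu_t|\le C\alpha_5\, d\mu_t$, while $|\nabla\phi|\le 2\beta d_{g(t)}\phi$ almost everywhere. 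The key step is to test the differential inequality against $\eta_R^2\phi w$ and integrate over $M$ with respect to $d\mu_t$.

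Integration by parts on the divergence term produces $\int a|\nabla w|^2\eta_R^2\phi\, d\mu_t$ together with a cross term $\int a\,\nabla w\cdot\nabla(\eta_R^2\phi)\, d\mu_t$; Young's inequality splits the latter so that half of $\alpha_1'\int|\nabla w|^2\eta_R^2\phi\, d\mu_t$ is absorbed on the left and the remainder is a bounded quadratic in $w$ with coefficients proportional to $\eta_R^{-2}|\nabla\eta_R|^2\phi + \beta^2 d_{g(t)}^2\eta_R^2\phi$. The drift term $\int b\cdot\nabla w\,w\,\eta_R^2\phi\, d\mu_t$ is handled identically. For the time term I would rewrite
$$ \int m\,\partial_t v\cdot w\,\eta_R^2\phi\, d\mu_t = \tfrac12 \tfrac{d}{dt}\!\int m w^2\eta_R^2\phi\, d\mu_t - \tfrac12\!\int w^2\partial_t(\eta_R^2\phi)\,m\,d\mu_t - \tfrac12\!\int m w^2\eta_R^2\phi\,\partial_t d\mu_t, $$
the last two terms being bounded by $C(1+d_{g(t)}^2)\int m w^2\eta_R^2\phi\, d\mu_t$ thanks to (iv) and $m_0\le m\le m_1$. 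Enlarging $\gamma$ and shrinking $\beta$ so that the $-\tilde v$ and $\alpha_1'|\nabla w|^2$ terms dominate every $d_{g(t)}^2$-coefficient produced along the way leads, for $E_R(t) := \tfrac12\int m w^2\eta_R^2\phi\, d\mu_t$, to
$$ E_R'(t) \le C\, E_R(t) + \Phi_R(t), $$
where $\Phi_R(t)$ is supported in the annulus $B_{g(0)}(p,2R)\setminus B_{g(0)}(p,R)$ and is bounded by $C R^{-2}\int_{B_{g(0)}(p,2R)} e^{-\beta d_{g(t)}^2}\bigl(w^2+|\nabla w|^2\bigr)\, d\mu_t$.

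Since $E_R(0)=0$, Gr\"onwall yields $E_R(t)\le e^{Ct}\int_0^t\Phi_R(s)\, ds$. Letting $R\to\infty$ and combining the volume growth (i) with the weighted $L^2$-integrability (iii)---the inequality $\beta>\alpha_4$ is exactly what makes the Gaussian beat the weight in (iii)---forces $\Phi_R\to 0$ in $L^1([0,T])$, whence $E(t)\equiv 0$ and therefore $v\le 0$ on $M\times[0,T]$. The hardest part is the simultaneous bookkeeping of the three $t$-dependent objects $g(t)$, $d_{g(t)}$ and $d\mu_t$ together with the Gaussian weight $\phi$: the parameter $\beta$ must be small enough that the annular remainder $\Phi_R$ still decays, yet large enough for (iii) to apply, while $\gamma$ must be large enough to absorb every $d_{g(t)}^2$-factor generated by (iv). The weight $m(x)$ itself is essentially cosmetic, since $m_0\le m\le m_1$ contributes only multiplicative constants to the energy and does not alter the Gr\"onwall structure.
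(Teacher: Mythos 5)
Your overall architecture (weighted $L^2$ energy, spatial cutoff, Gaussian weight, Gr\"onwall, then killing the annular remainder via (iii) and the volume growth) is the right family of argument, and your handling of the weight $m(x)$ and of the $t$-dependence of $d\mu_t$ via (iv) is fine. But there is a genuine gap at the absorption step, and it is not a bookkeeping issue. With the static-in-form Gaussian $\phi=\exp(-\beta\, d_{g(t)}(p,x)^2)$, integrating the divergence term by parts produces, after Young's inequality, a zero-order term of size $C\beta^{2} d_{g(t)}^{2}\, w^{2}\phi$, and condition (iv) produces a further term $C\alpha_5\, d_{g(t)}^{2}\, w^{2}\phi$ from $\partial_t\phi$ and $\partial_t(d\mu_t)$. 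These coefficients are unbounded in $x$, so they cannot be dominated by the constant-coefficient good terms $-\gamma\int m\,w^{2}\eta_R^{2}\phi$ and $-\tfrac12\alpha_1'\int|\nabla w|^{2}\eta_R^{2}\phi$, no matter how large $\gamma$ is and no matter how small $\beta>0$ is (and shrinking $\beta$ below $\alpha_4$ would in any case destroy the applicability of (iii)). Consequently the claimed inequality $E_R'(t)\le C\,E_R(t)+\Phi_R(t)$ with a constant $C$ is never actually established, and Gr\"onwall cannot be invoked.

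The paper's proof (following Ecker--Huisken) resolves exactly this point by making the Gaussian weight sharpen backward in time: it uses $h(y,t)=-\theta\, d_{g(t)}(p,y)^2/\bigl(4(2\eta-t)\bigr)$ on a short interval $[0,\eta]$ with $\eta$ small in terms of $\alpha_4$ and $\alpha_5$ and $\theta=1/(4\alpha_1)$. Then $\partial_t h$ contributes a negative term of order $-d_{g(t)}^2$ which yields the pointwise inequality $m_0\,\tfrac{d}{dt}h+2a|\nabla h|^{2}\le 0$ and cancels the bad quadratic-in-$d$ terms before any Gr\"onwall step; the price is that the conclusion is first obtained only on $[0,\eta]$ and must then be iterated to reach $T$. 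To repair your proof, replace $\exp(-\beta d^{2})$ by $e^{h}$ with this time-dependent exponent (keeping your exponential shift $e^{-\gamma t}$, which plays the role of the paper's factor $e^{-\beta t}$ with $m_0\beta\geq 2n\alpha_5+4\alpha_3+4\alpha_2^{2}/\alpha_1'$), and either truncate at level $K$ as the paper does via $f_K=\max\{\min(f,K),0\}$ or justify separately that all integrals written with $w=\tilde v_{+}$ are finite.
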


\begin{Rk}\label{remark_max}
Note that the conditions (iii) and (iv) are satisfied if the sectional curvature of $g(t)$ and $\nabla v$ are uniformly bounded on $[0,T]$. There  are many versions of maximum principles, one may prefer to \cite{A} and \cite{BK}.
\end{Rk}

 \textbf{Proof of Theorem \ref{Ecker_Huisken}:}
Fix $K_0>0$ large. We choose $\theta>0$ and let
$$
h(y,t)=-\frac{\theta d^2_{g(t)}(p,y)}{4(2\eta-t)},0<t<\eta,
$$
where $d_{g(t)}(p,y)$ is the distance between $p$ and $y$ at time $t$ and
$0<\eta<\min(T,\frac{1}{64K_0},\frac{1}{32\alpha_4},\frac{1}{4\alpha_5})$. Then
\begin{align*}
\frac{d}{dt}h=-\frac{\theta d^2_{g(t)}(p,y)}{4(2\eta-t)^2}-\frac{\theta d_{g(t)}(p,y)}{2(2\eta-t)}\frac{d}{dt}d_{g(t)}(p,y).
\end{align*}
By (iv), we have
\begin{align*}
|\frac{d}{dt}d_{g(t)}(p,y)|\leq \frac{1}{2}\alpha_5 d_{g(t)}(p,y).
\end{align*}
Then we have that
\begin{align*}
    \frac{d}{dt}h\leq -\theta^{-1}|\nabla h|^2+\theta^{-1}\alpha_5|\nabla h|^2 (2\eta-t),
\end{align*}
We choose $\theta=\frac{1}{4\alpha_1}$. Using $\eta\leq \frac{1}{4\alpha_5}$ we have
\begin{align}\label{h_estiamte}
m_0\frac{d}{dt}h+2a|\nabla h|^2\leq 0.
\end{align}
Let $K>0$, which will be a very large constant.
Taking $f_K=\max\{\min(f,K),0\}$ and $0<\epsilon<\eta$, we have
\begin{align*}
&\int^{\eta}_{\epsilon}e^{-\beta t}(\int_{M}\phi^2 e^h f_K(div(a\nabla f)-\frac{\partial f}{\partial t})d\mu_t)dt\\
\geq & -\alpha_2 \int^{\eta}_{\epsilon}e^{-\beta t}(\int_{M}\phi^2 e^h f_K|\nabla f|d\mu_t)dt\\
&-\alpha_3 \int^{\eta}_{\epsilon}e^{-\beta t}(\int_{M}\phi^2 e^h f_K fd\mu_t)dt
\end{align*}
for some smooth time independent compactly supported function $\phi$ on $M^n$, where $\beta>0$ will be chosen later. Then we have
\begin{align*}
0\leq &-\int^{\eta}_{\epsilon}e^{-\beta t}(\int_{M}\phi^2 e^h a <\nabla f_K,\nabla f_K>d\mu_t)dt\\
&-\int^{\eta}_{\epsilon}e^{-\beta t}(\int_{M}\phi^2 e^h f_K a<\nabla h,\nabla f>d\mu_t)dt\\
&-2\int^{\eta}_{\epsilon}e^{-\beta t}(\int_{M}\phi e^h f_K a<\nabla \phi,\nabla f>d\mu_t)dt\\
&-\int^{\eta}_{\epsilon}e^{-\beta t}(\int_{M}m(x)\phi^2 e^h f_K\frac{\partial f}{\partial t}d\mu_t)dt
+\alpha_3 \int^{\eta}_{\epsilon}e^{-\beta t}(\int_{M}\phi^2 e^h f_K fd\mu_t)dt \\
&+\alpha_2 \int^{\eta}_{\epsilon}e^{-\beta t}(\int_{M}\phi^2 e^h f_K|\nabla f|d\mu_t)dt\\
=&\textrm{I}+\textrm{II}+\textrm{III}+\textrm{IV}+\textrm{V}+\textrm{VI}.
\end{align*}
By Schwartz' inequality, we derive
\begin{align*}
\textrm{II}\leq \frac{1}{4}\int^{\eta}_{\epsilon}e^{-\beta t}(\int_{M}\phi^2 e^h a|\nabla f|^2d\mu_t)dt+\int^{\eta}_{\epsilon}e^{-\beta t}(\int_{M}\phi^2 e^h f_K^2 a|\nabla h|^2d\mu_t)dt,
\end{align*}
\begin{align*}
\textrm{III}\leq \frac{1}{2}\int^{\eta}_{\epsilon}e^{-\beta t}(\int_{M}\phi^2 e^h a|\nabla f|^2d\mu_t)dt+2\int^{\eta}_{\epsilon}e^{-\beta t}(\int_{M} e^h f_K^2 a|\nabla \phi|^2d\mu_t)dt,
\end{align*}
and
\begin{align*}
\textrm{VI}&\leq \frac{1}{4}\int^{\eta}_{\epsilon}e^{-\beta t}(\int_{M}\phi^2 e^h a|\nabla f|^2d\mu_t)dt+\alpha_2^2\int^{\eta}_{\epsilon}e^{-\beta t}(\int_{M} e^h f_K^2 \frac{1}{a}|\nabla \phi|^2d\mu_t)dt\\
&\leq \frac{1}{4}\int^{\eta}_{\epsilon}e^{-\beta t}(\int_{M}\phi^2 e^h a|\nabla f|^2d\mu_t)dt+\frac{\alpha_2^2}{\alpha_1'}\int^{\eta}_{\epsilon}e^{-\beta t}(\int_{M} e^h f_K^2 |\nabla \phi|^2d\mu_t)dt.
\end{align*}
Since
\begin{align*}
-e^hf_K\frac{\partial f}{\partial t}\leq -e^h f_K \frac{\partial f_K}{\partial t}+\frac{\partial }{\partial t}(e^hf_K(f_K-f)),
\end{align*}
and
$$f_K(f_K-f)\leq 0,$$
we obtain
\begin{align*}
&\qquad\textrm{IV}+\textrm{V}\\
&\leq -\frac{1}{2}\int^{\eta}_{\epsilon}e^{-\beta t}(\int_{M}m(x)\phi^2 e^h \frac{\partial f_K^2}{\partial t}d\mu_t)dt
+\int^{\eta}_{\epsilon}e^{-\beta t}(\int_{M}m(x)\phi^2 \frac{\partial }{\partial t}(e^h f_K(f_K-f))d\mu_t)dt\\
&
-\alpha_3 \int^{\eta}_{\epsilon}e^{-\beta t}(\int_{M}\phi^2 e^h f_K(f_K-f)d\mu_t)dt+\alpha_3 \int^{\eta}_{\epsilon}e^{-\beta t}(\int_{M}\phi^2 e^h f_K^2d\mu_t)dt.
\end{align*}
Moreover, we have
\begin{align*}
|\frac{d}{dt}(d\mu_t)|\leq n \alpha_5 d\mu_t
\end{align*}
by (iv). Now we choose $\beta>0$ such that $m_0\beta\geq 2n\alpha_5+4\alpha_3+4\frac{\alpha_2^2}{\alpha_1'}$. Then
\begin{align*}
&\qquad\textrm{IV}+\textrm{V}\\
&\leq -\frac{1}{2}e^{-\beta t}\int_{M}m(x)\phi^2 e^h f_K^2d\mu_t|_{t=\eta}
+\frac{1}{2}e^{-\beta t}\int_{M}m(x)\phi^2 e^h f_K^2d\mu_t|_{t=\epsilon}\\
&+\frac{1}{2}\int^{\eta}_{\epsilon}e^{-\beta t}(\int_{M}m(x)\phi^2 e^h f_K^2 \frac{\partial h}{\partial t}d\mu_t)dt-\frac{1}{4}m_0\beta\int^{\eta}_{\epsilon}e^{-\beta t}(\int_{M}\phi^2 e^h f_K^2 d\mu_t)dt\\
&
+e^{-\beta t}\int_{M}\phi^2 e^h f_K(f_K-f)d\mu_t|_{t=\eta}-e^{-\beta t}\int_{M}\phi^2 e^h f_K^2d\mu_t|_{t=\epsilon}.
\end{align*}
Combining the estimates of $\textrm{I}-\textrm{VI}$ and letting $\epsilon\to 0$, we obatin
\begin{align*}
0\leq &-\int^{\eta}_{0}e^{-\beta t}(\int_{M}\phi^2 e^h a |\nabla f_K|^2d\mu_t)dt
+\int^{\eta}_{0}e^{-\beta t}(\int_{M}\phi^2 e^h a |\nabla f|^2d\mu_t)dt\\
&+2\int^{\eta}_{0}e^{-\beta t}(\int_{M} e^h f_K^2 a|\nabla \phi|^2d\mu_t)dt-\frac{1}{2}e^{-\beta t}\int_{M}m(x)\phi^2 e^h f_K^2d\mu_t|_{t=\eta}.
\end{align*}
by $f_K\equiv 0$ at $t=0$ and (\ref{h_estiamte}). Now we choose $0\leq \phi\leq 1$ satisfying $\phi \equiv 1$ on $B_{g_0}(p,R)$, $\phi \equiv 0$ outside $B_{g_0}(p,R+1)$ and $|\nabla_{g_0} \phi|_{g_0}\leq 2$. Then we have
\begin{align*}
&\frac{1}{2}e^{-\beta \eta}\int_{B_{g_0}(p,R)}m(x)\phi^2 e^h f_K^2d\mu_t|_{t=\eta}\leq \int^{\eta}_{0}e^{-\beta t}(\int_{B_{g_0}(p,R+1)}\phi^2 e^h a (|\nabla f|^2-|\nabla f_K|^2)d\mu_t)dt\\
&+C(\alpha_5)\int^{\eta}_{0}e^{-\beta t}(\int_{B_{g_0}(p,R+1)\backslash B_{g_0}(p,R)} e^h f_K^2 a d\mu_t)dt,
\end{align*}
where $C(\alpha_5)$ is a constant only depending on $\alpha_5$. By $0<\eta<\min(\frac{1}{K_0},\frac{1}{32\alpha_4})$ and volume growth assumptions
on $M^n$, we have
\begin{align*}
\int^{\eta}_{0}e^{-\beta t}(\int_{B_{g_0}(p,R+1)\backslash B_{g_0}(p,R)} e^h f_K^2 a d\mu_t)dt\to 0,
\end{align*}
as $R\to \infty$. Then we derive
\begin{align*}
&\frac{1}{2}e^{-\beta \eta}\int_{M}\phi^2 e^h f_K^2d\mu_t|_{t=\eta}\leq \int^{\eta}_{0}e^{-\beta t}(\int_{M}\phi^2 e^h a (|\nabla f|^2-|\nabla f_K|^2)d\mu_t)dt.
\end{align*}
Letting $K\to\infty$, we conclude that
\begin{align*}
\frac{1}{2}e^{-\beta \eta}\int_{M}m(x)\phi^2 e^h (\max(f,0))^2d\mu_t|_{t=\eta}\leq 0,
\end{align*}
where $0<\eta<\min(T,\frac{1}{64K_0},\frac{1}{32\alpha_4},\frac{1}{4\alpha_5})$. That implies that $f\leq 0$ in $M^n\times[0,\eta]$. By the induction argument, we then have that
$f\leq 0$ in $M^n\times[0,T]$.
$\Box$

\end{document}